\numberwithin{equation}{section}
\theoremstyle{plain}
\newtheorem{theorem}{\bf Theorem}
\newtheorem{lemma}{\bf Lemma}[section]
\newtheorem{proposition}{Proposition}
\theoremstyle{definition}
\newtheorem{case[theorem]}{Case}
\theoremstyle{remark}
\numberwithin{equation}{section}
\begin{document}

\title{\parbox{14cm}{\centering{Upper bounds on pairs of dot products}}}
\author{Daniel Barker and Steven Senger}

\subitem \email{poozzab@udel.edu\footnote{Partially supported by a grant from the University of Delaware for summer undergraduate research.}, stevensenger@missouristate.edu}

\thanks{}

\setstretch{1.25}

\begin{abstract} Given a large finite point set, $P\subset \mathbb R^2$, we obtain upper bounds on the number of triples of points that determine a given pair of dot products. That is, for any pair of positive real numbers, $(\alpha, \beta)$,  we bound the size of the set $$\left\{(p,q,r)\in P \times P \times P : p \cdot q = \alpha, p \cdot r = \beta   \right\}.$$

\end{abstract}

\maketitle

\section{Introduction}

Many elementary problems in geometric combinatorics ask how often a particular type of point configuration can occur in subsets of some ambient space. One of the most famous is the Erd\H os single distance problem, which asks how often any fixed distance can occur in a large finite set of points in the plane. The conjecture is that for a set of $n$ points, no distance can occur more than $Cn^{1+\epsilon}$ times, for some constant $C$, independent of $n$, and any $\epsilon >0$. The best known estimate of this is due to Spencer, Szemer\` edi, and Trotter, in \cite{SST}, who have shown that $Cn^\frac{4}{3}$ is an upper bound. A closely related problem, the Erd\H os distinct distances problem, asks for a lower bound on the number of distinct distances determined by point pairs from a large finite point set. This was resolved in the plane by Guth and Katz, in \cite{GK}. Analogous questions have been studied for dot products; see \cite{IRR}, \cite{Steinerberger}, and \cite{GIS}.

Here, we consider triples of points which determine a pair of dot products in large finite point sets. In the settings of vector spaces over various finite rings, there has been activity on the special case of zero dot products by the second listed author, and Iosevich \cite{IS}, as well as Pham and Vinh, in \cite{Vinh}. Information about the dot products determined by a point set finds applications in varied areas such as coding theory, \cite{AB}, graph theory, \cite{Bahls}, and frame theory, \cite{Fickus}. 

We now fix some notation. In what follows, if two quantities, $X$ and $Y$, vary with respect to some natural number parameter, $n$, then we write $X \lesssim Y$ if there exist constants, $C$ and $N$, both independent of $n$, such that for all $n> N$, we have $X\leq CY$. If $X \lesssim Y$ and $Y \lesssim X$, we write $X \approx Y.$  Given a set of points, $P\subset [0,1]^2$, let $\Pi_{\alpha , \beta} (P)$ denote the number of distinct triples of points, that determine a given pair of dot products. That is, for real numbers $\alpha$ and $\beta$,
$$\Pi_{\alpha,\beta}(P) = \{(p,q,r)\in P \times P \times P : p\cdot q = \alpha \text{ and } p \cdot r = \beta\}.$$

We will typically restrict $\alpha$ and $\beta$ to be positive nonzero, because zero dot products behave differently, as demonstrated by Proposition \ref{zeroDPs}. Our first main result applies to any point set in the plane.

\newpage

\begin{theorem} \label{general}
Given a set, $P$, of $n$ points in $\mathbb R^2$, and fixed $\alpha , \beta \neq 0$,
$$|\Pi_{\alpha,\beta}(P)|\lesssim n^2.$$
\end{theorem}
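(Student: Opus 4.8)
The plan is to count the triples by summing over the ``outer'' pair $(q,r)$ and, for each such pair, bounding the number of base points $p$ that complete it:
$$|\Pi_{\alpha,\beta}(P)| = \sum_{(q,r)\in P\times P} \bigl|\{\,p\in P : p\cdot q = \alpha,\ p\cdot r = \beta\,\}\bigr|.$$
The point of organizing the sum this way is that, for fixed $q$ and $r$, the two constraints on $p$ form a linear system of two equations in the two coordinates of $p$, whose number of solutions is controlled purely by the linear (in)dependence of $q$ and $r$. (Summing instead over $p$ first expresses the count as $\sum_p f(p)g(p)$, where $f(p)$ and $g(p)$ count points of $P$ on two lines; since a single line may carry all $n$ points, this only yields $n^3$ without further work, which is why I prefer to fix the outer pair.)

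First I would handle the generic case. If $q$ and $r$ are linearly independent, the matrix with rows $q$ and $r$ is invertible, so $p\cdot q=\alpha,\ p\cdot r=\beta$ has a unique solution in $\mathbb R^2$; hence at most one $p\in P$ can complete the triple. As there are at most $n^2$ pairs $(q,r)$, this case contributes at most $n^2$ to the total.

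The delicate case is when $q$ and $r$ are linearly dependent, since then a whole line of points $p$ could satisfy $p\cdot q=\alpha$. This is exactly where the hypothesis $\alpha,\beta\neq 0$ is needed. If $q=0$ or $r=0$ the system is unsatisfiable, so I may take $r=\lambda q$ with $\lambda\neq 0$; then $p\cdot r=\lambda(p\cdot q)$ forces $\beta=\lambda\alpha$, so that $\lambda=\beta/\alpha$ is \emph{determined} by the data. Thus for each of the $n$ choices of $q$ there is at most one admissible partner $r=(\beta/\alpha)q$ in $P$, giving at most $n$ contributing degenerate pairs, each with at most $n$ solutions $p$. This bounds the degenerate contribution by $n\cdot n=n^2$ as well, and adding the two cases yields $|\Pi_{\alpha,\beta}(P)|\lesssim n^2$.

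I expect the only real obstacle to be the degenerate case, where the temptation is to fear an $n^3$ blow-up from lines heavy with points. The resolution is the observation that $\alpha,\beta\neq 0$ pins the ratio $\lambda$ and so collapses the two-parameter family of degenerate pairs down to a one-parameter family; this is reassuring, since it is precisely the place where the nonvanishing hypothesis must enter, in keeping with the contrasting behavior of zero dot products recorded in Proposition~\ref{zeroDPs}.
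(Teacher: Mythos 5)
Your proof is correct and takes essentially the same route as the paper: you fix the outer pair $(q,r)$ and bound the completing points $p$, splitting into generic pairs (where the two constraints determine at most one $p$) and degenerate pairs (where $\alpha,\beta\neq 0$ forces $r=(\beta/\alpha)q$, leaving only $n$ such pairs, each with at most $n$ choices of $p$). Your linear-algebra treatment of the degenerate case is precisely the content of the paper's Lemma \ref{setB}, just phrased algebraically rather than via radial lines.
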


In general, for a set of $n$ points, $P\subset \mathbb R^2$, one cannot expect to get an upper bound better than Theorem \ref{general}, as shown in an explicit construction below, Proposition \ref{sharp}. In cases where the points are evenly-distributed, such as applications with sensor placement or code construction, we have tighter bounds on $\Pi_{\alpha,\beta}(P)$. Our second main result is for point sets with a minimum separation between points.

\begin{theorem}\label{separation}
Let $P \subset [0,1]^2$ be a set of $n$ points that obeys the following separation condition
$$\min\{|p-q| : p,q\in P, ~p\neq q \}\geq \epsilon.$$ For $\epsilon > 0$, and fixed $\alpha , \beta \neq 0$, we have
$$|\Pi_{\alpha , \beta} (P)| \lesssim  n^{\frac{4}{3}}\epsilon^{-1}\log \left(\epsilon^{-1}\right).$$
\end{theorem}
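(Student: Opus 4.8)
The plan is to slice the count by the apex point $p$ and reduce it to a point--line incidence problem that the separation hypothesis then tames. For a fixed $p \in P$, the constraints $p\cdot q = \alpha$ and $p\cdot r = \beta$ force $q$ and $r$ onto the two fixed lines $\ell_\alpha(p) = \{x : p\cdot x = \alpha\}$ and $\ell_\beta(p) = \{x : p\cdot x = \beta\}$. Writing $f_\gamma(p) = |\{q\in P : p\cdot q = \gamma\}|$ for $\gamma \in \{\alpha,\beta\}$, the triples split as a product over the apex, giving the exact identity $|\Pi_{\alpha,\beta}(P)| = \sum_{p\in P} f_\alpha(p)\, f_\beta(p)$. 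It therefore suffices to estimate this sum, and the whole game is to show that the two factors cannot both be large over many apexes.

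First I would use the separation condition to cap each factor pointwise. Any line meets $[0,1]^2$ in a segment of length at most $\sqrt{2}$, and since distinct points of $P$ are at distance at least $\epsilon$, at most $\lceil \sqrt{2}\,\epsilon^{-1}\rceil + 1 \lesssim \epsilon^{-1}$ points of $P$ lie on a single line; hence $f_\alpha(p), f_\beta(p) \lesssim \epsilon^{-1}$ uniformly in $p$. Pulling out the smaller factor yields $|\Pi_{\alpha,\beta}(P)| \le \bigl(\max_p f_\beta(p)\bigr)\sum_p f_\alpha(p) \lesssim \epsilon^{-1}\sum_p f_\alpha(p)$. The remaining sum $\sum_p f_\alpha(p)$ is exactly the incidence count $I(P,\mathcal L_\alpha)$ between the $n$ points of $P$ and the family $\mathcal L_\alpha = \{\ell_\alpha(q) : q\in P\}$. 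Because $\alpha \neq 0$, the map $q \mapsto \ell_\alpha(q)$ is injective (two such lines coincide only when the points share a direction and a distance to the origin), so $\mathcal L_\alpha$ is a set of $n$ distinct lines and the Szemer\'edi--Trotter theorem applies to give $\sum_p f_\alpha(p) \lesssim n^{4/3} + n \approx n^{4/3}$. Combining the two estimates produces $|\Pi_{\alpha,\beta}(P)| \lesssim n^{4/3}\epsilon^{-1}$.

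I expect the main obstacle to be purely one of applying the incidence bound cleanly and disposing of degeneracies rather than any deep difficulty: one must confirm that $\mathcal L_\alpha$ genuinely consists of distinct lines (so that Szemer\'edi--Trotter is legitimate), and one must separate out degenerate apexes, such as $p$ at or near the origin, for which $\ell_\gamma(p)$ may miss $[0,1]^2$ entirely and so contribute $f_\gamma(p)=0$; these cases are harmless and can be absorbed without affecting the exponent. Notice that the estimate above is in fact slightly stronger than the stated bound, as it carries no logarithmic factor. The factor $\log(\epsilon^{-1})$ in the statement is therefore non-essential slack, of the type that naturally appears if, in place of the sharp pointwise cap $\max_p f_\beta(p) \lesssim \epsilon^{-1}$, one instead organizes the sum by a dyadic decomposition of $P$ into the $\lesssim \log(\epsilon^{-1})$ richness scales $\{p : 2^j \le f_\alpha(p) < 2^{j+1}\}$ permitted by the separation cap and estimates each scale by Szemer\'edi--Trotter before summing. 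Either route establishes the claim, with the cleaner argument showing the logarithm can be removed.
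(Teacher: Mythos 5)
Your argument is correct, and it takes a genuinely simpler route than the paper's while proving a slightly stronger statement. Both proofs start from the same identity $|\Pi_{\alpha,\beta}(P)|=\sum_{p\in P}|\mathcal L_\alpha(p)||\mathcal L_\beta(p)|$ and both use the separation hypothesis to cap the number of points of $P$ on any single line by $O(\epsilon^{-1})$. The paper then bounds the \emph{sum} $I=\sum_p\left(|\mathcal L_\alpha(p)|+|\mathcal L_\beta(p)|\right)$ over the combined family of $2n$ lines, which forces it to rerun Sz\'ekely's crossing-number argument in the multigraph setting (since the $\alpha$-line of one point may coincide with the $\beta$-line of another), and afterwards converts the bound on the sum into a bound on the product by dyadically decomposing $P$ into the sets $P_{j,k}$ and extracting the largest of the $O\left(\log^2(\epsilon^{-1})\right)$ summands; that pigeonholing is the sole source of the logarithmic factor. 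You instead bound the product directly by $\left(\max_p|\mathcal L_\beta(p)|\right)\sum_p|\mathcal L_\alpha(p)|\lesssim\epsilon^{-1}\sum_p|\mathcal L_\alpha(p)|$, observe via the symmetry $p\cdot q=\alpha\iff p\in\ell_\alpha(q)$ that the remaining sum is the incidence count between the $n$ points of $P$ and the $n$ lines $\{\ell_\alpha(q):q\in P\}$, which are pairwise distinct precisely because $\alpha\neq 0$, and apply the standard Szemer\'edi--Trotter theorem to get $\lesssim n^{4/3}$. This sidesteps both the multigraph technicality and the dyadic decomposition, and your observation that it yields $|\Pi_{\alpha,\beta}(P)|\lesssim n^{4/3}\epsilon^{-1}$ with no logarithm is accurate: the $\log(\epsilon^{-1})$ in the stated theorem is an artifact of the paper's method rather than a necessary loss.
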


Notice that if $\epsilon$ is chosen to be too small, Theorem \ref{separation} is outdone by Theorem \ref{general}. Similarly, if $\epsilon$ is close to 1, there cannot be many points in the unit square. Keeping this in mind, the range in which Theorem \ref{separation} is most useful is $n^{-\frac{2}{3}}<\epsilon\leq n^{-\frac{1}{2}}$. This range lines up with other results on a wide class of finite point sets called $s$-adaptable sets.

We now introduce the notion of $s$-adaptability. This should be viewed as a measure for how well-distributed the points are. This property has been used to study many types of geometric point configuration problems. See \cite{IJL}, \cite{IRU}, and \cite{IS2}, for example. Families of point sets which are $s$-adaptable can be used to transfer results between discrete point sets and sets with positive Hausdorff dimension.
A large, finite point set $P \subset [0,1]^2$, is said to be {\it $s$-adaptable} if the following two conditions hold:
\begin{align*}
\text{(energy)} \qquad &\frac{1}{{n \choose 2}}\sum_{\substack{p, q \in P\\p\neq q}} |p-q|^{-s} \lesssim 1,\\
\text{(separation)} \qquad &\min\{|p-q| : p,q\in P,~ p\neq q \}\geq n^{-\frac{1}{s}}.\\
\end{align*}
By setting $\epsilon = n^{-\frac{1}{s}}$, and appealing to the definition of $s$-adaptability given here, we get the following estimate as a corollary.
\begin{theorem}\label{main}
Let $P \subset [0,1]^2$ be a set of $n$ points that is s-adaptable. For $2 \geq s > \frac{3}{2}$, and fixed $\alpha , \beta > 0$,
$$|\Pi_{\alpha , \beta} (P)| \lesssim n^{\frac{4}{3} + \frac{1}{s}}\log n.$$
\end{theorem}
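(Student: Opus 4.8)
The plan is to obtain Theorem \ref{main} directly from Theorem \ref{separation} by substituting the separation scale that $s$-adaptability provides. The key observation is that the energy condition in the definition of $s$-adaptability plays no role in the estimate we are after; only the separation condition is needed, and it hands us precisely the hypothesis required by Theorem \ref{separation}. So the real content of Theorem \ref{main} is already contained in Theorem \ref{separation}, and what remains is a clean specialization.

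First I would invoke $s$-adaptability to record that $P$ satisfies $\min\{|p-q| : p,q\in P,\ p\neq q\} \geq n^{-1/s}$. This is exactly the separation hypothesis of Theorem \ref{separation} with the choice $\epsilon = n^{-1/s}$. Since $s > 3/2$ gives $\epsilon = n^{-1/s} > 0$, the requirement $\epsilon > 0$ is met, and $\alpha, \beta > 0$ are fixed as demanded. Thus Theorem \ref{separation} applies verbatim.

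Next I would push $\epsilon = n^{-1/s}$ through the conclusion of Theorem \ref{separation}. This yields $\epsilon^{-1} = n^{1/s}$ and $\log(\epsilon^{-1}) = \tfrac{1}{s}\log n$. Because $s$ is a fixed constant, the factor $\tfrac{1}{s}$ is absorbed into the implied constant of the $\lesssim$ notation, so $\log(\epsilon^{-1}) \lesssim \log n$. Combining the three factors gives
$$|\Pi_{\alpha,\beta}(P)| \lesssim n^{\frac{4}{3}} \cdot n^{\frac{1}{s}} \cdot \log n = n^{\frac{4}{3} + \frac{1}{s}} \log n,$$
which is the asserted bound.

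The only point requiring care is not a genuine obstacle but a consistency check: one should confirm that the range $2 \geq s > \frac{3}{2}$ is exactly the regime in which this substitution is meaningful, matching the earlier remark that Theorem \ref{separation} is sharpest for $n^{-2/3} < \epsilon \leq n^{-1/2}$. Indeed, $\epsilon = n^{-1/s} \leq n^{-1/2}$ is equivalent to $s \leq 2$, while $\epsilon = n^{-1/s} > n^{-2/3}$ is equivalent to $s > \frac{3}{2}$, so the stated interval for $s$ precisely encodes the useful window of separation scales and guarantees that the resulting bound improves on the generic estimate of Theorem \ref{general}. All the genuine combinatorial work lives in the proof of Theorem \ref{separation}; here it suffices to specialize.
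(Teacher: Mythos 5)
Your proposal is correct and is essentially identical to the paper's own derivation: the authors obtain Theorem \ref{main} as an immediate corollary of Theorem \ref{separation} by setting $\epsilon = n^{-1/s}$ via the separation condition in the definition of $s$-adaptability, exactly as you do. Your added consistency check that the range $2 \geq s > \frac{3}{2}$ matches the window $n^{-2/3} < \epsilon \leq n^{-1/2}$ is a nice touch but not part of the paper's argument.
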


First, we construct examples of point sets that illustrating the sharpness of Theorem \ref{general} as well as an illustration of why we assume the restriction of $\alpha,\beta \neq 0$. Next, we prove Theorem \ref{general} and Theorem \ref{separation} in Section \ref{proofs}. Section \ref{lemmaProofs} contains the proofs of two technical lemmas, included for completion.




\section{Explicit constructions}
\subsection{Sharpness of Theorem \ref{general}}
\begin{proposition}\label{sharp}
Given a natural number $n$, and real numbers $0< \alpha,\beta<2$, there is a set, $P$, of $n$ points in $[0,1]^2$ for which
$$|\Pi_{\alpha,\beta}(P)|\approx n^2.$$
\end{proposition}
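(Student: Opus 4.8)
The plan is to combine the general upper bound with an explicit configuration realizing it. Since Theorem \ref{general} already gives $|\Pi_{\alpha,\beta}(P)| \lesssim n^2$ for every $n$-point set $P \subset \mathbb R^2$, it suffices to exhibit a set $P \subset [0,1]^2$ of $n$ points with $|\Pi_{\alpha,\beta}(P)| \gtrsim n^2$; the two bounds together give $\approx n^2$.

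First I would single out one apex point and force it to see many points at each of the two prescribed dot products. Take $p_0 = (1,1)$, so that $p_0 \cdot x = x_1 + x_2$ for every $x$. The level sets $L_t = \{x \in [0,1]^2 : x_1 + x_2 = t\}$ are segments, and for each fixed $t \in (0,2)$ this segment is nondegenerate (for $0 < t \le 1$ it runs from $(t,0)$ to $(0,t)$, and for $1 \le t < 2$ from $(t-1,1)$ to $(1,t-1)$), hence has positive length. Consequently $L_\alpha$ and $L_\beta$ can each accommodate arbitrarily many distinct points as $n$ grows.

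Next I would assemble $P$. Let $m = \lfloor (n-1)/2 \rfloor$. If $\alpha \neq \beta$, choose $m$ distinct points $Q \subset L_\alpha$ and $m$ distinct points $R \subset L_\beta$, all different from $p_0$, and set $P = \{p_0\} \cup Q \cup R$; since $L_\alpha$ and $L_\beta$ are disjoint parallel segments, this is a set of $2m+1 \le n$ points (padding with a few arbitrary points, if necessary, to reach exactly $n$). By construction $p_0 \cdot q = \alpha$ for every $q \in Q$ and $p_0 \cdot r = \beta$ for every $r \in R$, so each pair $(q,r) \in Q \times R$ yields a valid triple $(p_0, q, r) \in \Pi_{\alpha,\beta}(P)$, giving at least $m^2 \gtrsim n^2$ triples. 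If $\alpha = \beta$, the two level sets coincide, and I would instead place $\approx n$ points on the single segment $L_\alpha$ together with $p_0$; every ordered pair drawn from these points again produces a valid triple, so $|\Pi_{\alpha,\beta}(P)| \gtrsim n^2$.

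The construction itself is elementary, so the only point requiring care --- and the closest thing to an obstacle --- is verifying uniformity in the parameters: I must confirm that for each fixed $\alpha, \beta \in (0,2)$ the relevant segments have positive length (which fails precisely at the excluded endpoints $0$ and $2$, explaining the hypothesis $0 < \alpha, \beta < 2$), and that the points placed on them remain distinct and inside $[0,1]^2$ for all large $n$. Once this is checked, the lower bound $\gtrsim n^2$ combines with Theorem \ref{general} to finish the proof.
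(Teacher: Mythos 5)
Your proposal is correct and follows essentially the same route as the paper: fix the apex $p=(1,1)$, place roughly $n/2$ points on each of the lines $x_1+x_2=\alpha$ and $x_1+x_2=\beta$ (which meet $[0,1]^2$ in segments of positive length precisely because $0<\alpha,\beta<2$), and count the $\gtrsim n^2$ resulting triples, with the matching upper bound supplied by Theorem \ref{general}. Your extra attention to the case $\alpha=\beta$ and to the nondegeneracy of the segments is a welcome bit of added care, but it does not change the argument.
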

\begin{proof}
Let $p$ be the point with coordinates $(1,1)$. Now, staying within the unit square, distribute $\left\lfloor \frac{n-1}{2} \right\rfloor$ points along the line $y=\alpha -x$, and distribute the remaining $\left\lceil \frac{n-1}{2} \right\rceil$ points along the line $y=\beta -x$. Clearly, there are $\gtrsim n^2$ pairs of points $(q,r)$, where $q$ is chosen from the first line, and $r$ is chosen from the second. Notice that $p$ contributes a triple to $\Pi_{\alpha,\beta}(P)$ for each such pair, giving us
$$|\Pi_{\alpha,\beta}(P)|\approx n^2.$$
\end{proof}

\subsection{The special case $\alpha=\beta=0$}
\begin{proposition}\label{zeroDPs}
There exists a set, $P$, of $n$ points in $[0,1]^2$ for which
$$|\Pi_{0,0}(P)|\approx n^3.$$
\end{proposition}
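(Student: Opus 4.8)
The plan is to exploit the fact that every point in $[0,1]^2$ has nonnegative coordinates, which makes a zero dot product extremely restrictive. Indeed, for $p=(p_1,p_2)$ and $q=(q_1,q_2)$ with all coordinates nonnegative, $p\cdot q = p_1q_1+p_2q_2 = 0$ forces $p_1q_1=0$ and $p_2q_2=0$ separately. The cleanest way to manufacture many such orthogonal pairs is to place points on the two coordinate axes, since a point on the $y$-axis is orthogonal to every point on the $x$-axis, regardless of how far apart they are.

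Concretely, I would split the $n$ points into two roughly equal groups. Place $\lfloor n/2 \rfloor$ distinct points of the form $(a,0)$ with $a \in (0,1]$ on the $x$-axis, and call this set $A$; place the remaining $\lceil n/2 \rceil$ distinct points of the form $(0,b)$ with $b \in (0,1]$ on the $y$-axis, and call this set $B$. All of these points are distinct and lie in $[0,1]^2$, so this is a legitimate configuration of $n$ points.

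Next I would observe that for any $p=(0,b) \in B$ and any $q=(a,0) \in A$ we have $p\cdot q = 0\cdot a + b\cdot 0 = 0$. Consequently, for each fixed $p \in B$ and each ordered pair $(q,r) \in A \times A$, the triple $(p,q,r)$ satisfies $p\cdot q = 0$ and $p\cdot r = 0$, so it is counted by $\Pi_{0,0}(P)$. This already produces at least $|B|\,|A|^2 \approx (n/2)^3$ valid triples, giving the lower bound $|\Pi_{0,0}(P)| \gtrsim n^3$. The matching upper bound is immediate, since $\Pi_{0,0}(P)$ is a subset of $P\times P\times P$, whence $|\Pi_{0,0}(P)| \le n^3$. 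Combining the two yields $|\Pi_{0,0}(P)| \approx n^3$.

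There is essentially no hard computation here; the only point requiring care is the elementary sign analysis showing that, inside the closed first quadrant, orthogonality of two points is equivalent (away from the origin) to their lying on perpendicular coordinate axes. This is precisely what lets the axis construction force so many coincidences, and it also clarifies the qualitative contrast with the nonzero case of Theorem \ref{general}: orthogonality can propagate freely across an entire line of points, whereas a fixed nonzero dot product with a fixed $p$ confines $q$ to a single line not through the origin, which is what caps the nonzero count at $n^2$.
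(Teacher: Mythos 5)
Your proposal is correct and uses essentially the same construction as the paper: split the points between the two coordinate axes and count the resulting orthogonal triples, which yields $\gtrsim n^3$ against the trivial $n^3$ upper bound. The only cosmetic difference is that the paper also counts the symmetric family of triples (single point on the $y$-axis paired with two points on the $x$-axis and vice versa), which is unnecessary for the asymptotic claim.
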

\begin{proof}
Arrange $\frac{n}{2}$ along the $x$-axis, and $\frac{n}{2}$ points along the $y$-axis. Now, for each of the $\frac{n}{2}$ points on the $x$-axis, there are $\left(\frac{n}{2}\right)\left(\frac{n}{2}\right)$ pairs of points on the $y$-axis. Notice that any point chosen from the $x$-axis will have dot product zero with each point from the pair chosen from the $y$-axis. Therefore, each of these $\frac{1}{8}n^3$ triples will contribute to $\Pi_{0,0}(P)$.

We can get just as many triples that contribute to $\Pi_{0,0}(P)$ by taking single points from the $y$-axis, and pairs of points from the $x$-axis. In total, we get
$$|\Pi_{0,0}(P)|=\frac{1}{8}n^3+\frac{1}{8}n^3 \approx n^3.$$
\end{proof}

\section{Proofs of main results}\label{proofs}

\subsection{Proof of Theorem \ref{general}}\label{genProof}

\begin{proof}
The basic idea is to estimate the number of triples, $(p,q,r)\in \Pi_{\alpha,\beta}(P)$, by considering pairs of points, $(q,r)\in P\times P$, and bounding the number of possible points, $p\in P$, which could contribute to $\Pi_{\alpha,\beta}(P)$.

Call any line through the origin a {\it radial line}. Let $p\in P$ have coordinates $(p_x,p_y)$. We define the {\it $\alpha$-line} for a point, $p$, to be the set of points that have dot product $\alpha$ with the point $p$. Holding $p$ fixed, this set will be a line of points $q$, with coordinates $(q_x,q_y)$ satisfying the equation
\begin{equation}\label{line}
p\cdot q = p_xq_x+p_yq_y=\alpha.
\end{equation}

\begin{center}
\includegraphics[scale=1]{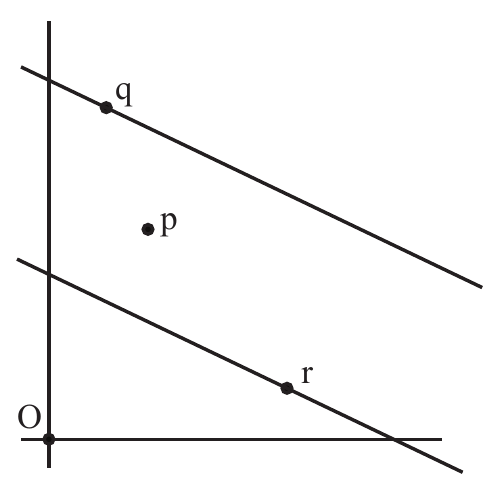}\\
\small{FIGURE 1: Here, $q$ lies on the $\alpha$-line of $p$, and $r$ lies on the $\beta$-line of $p$.}
\end{center}
We can define a {\it $\beta$-line} similarly. By solving \eqref{line} for $q_y$, we can see that the slope of the $\alpha$-line of a point, $p$, will be equal to the slope of the $\beta$-line of the point $p$. Moreover, these lines will be perpendicular to the radial line through $p$. We let $\mathcal L_\alpha (p)$ and $\mathcal L_\beta (p)$ denote the set of all points of $P$ incident to the $\alpha$-line or $\beta$-line for the given point $p$.

Draw $\alpha$-lines and $\beta$-lines for any given pair of $(q,r) \in P \times P$. We will refer to the set of point pairs with distinct $\alpha$-lines and $\beta$-lines as $A$ and pairs with a shared line as $B$. To be precise:
$$A = \{(q,r)\in P\times P:\mathcal L_\alpha (q)\neq \mathcal L_\beta (r) \text{ and }\mathcal L_\beta (q)\neq \mathcal L_\alpha (r)\},$$ $$\text{ and } B = (P\times P) \setminus A.$$

We first consider triples in $\Pi_{\alpha,\beta}(P)$ of the form $(p,q,r)$ where $(q,r)\in A$. Notice that the $\alpha$-lines and $\beta$-lines of a pair of points, $(q,r)\in A$, can intersect at most four times, by definition of the set $A$. So for every pair of points in $A$, there are at most four possible locations for a point in $P$ which would contribute a triple to $\Pi_{\alpha,\beta}(P)$. As $A\subset P\times P$, we see that $|A|\leq n^2$. From this, we see that pairs in $A$ cannot add more than $4n^2$ triples to $\Pi_{\alpha,\beta}(P)$. //\\

We now turn our attention to the set $B$. Without loss of generality, suppose that the $\alpha$-line of a point, $q\in P$, coincides with the $\beta$-line of a point, $r\in P$, then we appeal to the following lemma.

\begin{lemma}\label{setB}
For any pair of points, $(q,r)\in B$, as defined above, the following hold:
\begin{enumerate}
\item  Both $q$ and $r$ must lay along the same radial line.
\item The ratio of the distances from $q$ and $r$ to the origin must equal the ratio between $\beta$ and $\alpha$.
\end{enumerate}
\end{lemma}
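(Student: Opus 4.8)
The plan is to reduce the hypothesis to a single statement about two affine lines coinciding, and then extract both conclusions from the elementary fact that two lines in the plane are equal (as point sets) precisely when their defining equations are proportional. After the reduction made just above the lemma, the hypothesis $(q,r)\in B$ says that the $\alpha$-line of $q$ and the $\beta$-line of $r$ are the same line. Writing $q=(q_x,q_y)$ and $r=(r_x,r_y)$, these lines have equations $q_x x+q_y y=\alpha$ and $r_x x+r_y y=\beta$, with normal vectors $q$ and $r$. Note first that since $\alpha,\beta\neq 0$ the origin lies on neither line, so in particular $q\neq(0,0)$ and $r\neq(0,0)$ and both equations genuinely describe lines.

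With the lines set up, I would invoke the proportionality criterion: the two lines coincide if and only if there is a nonzero scalar $\lambda$ with
$$(q_x,q_y,\alpha)=\lambda\,(r_x,r_y,\beta).$$
Reading the first two coordinates gives $q=\lambda r$, so $q$ and $r$ are scalar multiples of one another and hence lie on a common line through the origin; this is exactly conclusion (1), that $q$ and $r$ share a radial line. Reading the third coordinate gives $\alpha=\lambda\beta$, i.e. $\lambda=\alpha/\beta$, and then $|q|=|\lambda|\,|r|$ yields $|q|/|r|=\alpha/\beta$ (equivalently $|r|/|q|=\beta/\alpha$), which pins down the distance ratio as in conclusion (2).

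Since the whole argument is essentially one linear-algebra identity, I do not expect a substantive obstacle; the care lies entirely in the bookkeeping. Two points deserve attention. First, one must rule out degenerate "lines," which is exactly why $\alpha,\beta\neq 0$ is assumed, as noted above. Second, the formal definition of $B$ compares the finite sets $\mathcal L_\alpha(q)$ and $\mathcal L_\beta(r)$ of points of $P$ lying on each line, and equality of these finite sets does not by itself force the underlying lines to coincide; for example, both could be empty. I would therefore stress, as the surrounding discussion intends, that the lemma is about pairs whose $\alpha$- and $\beta$-lines genuinely coincide as lines. These are the only pairs in $B$ that can contribute triples to $\Pi_{\alpha,\beta}(P)$, so confining attention to them loses nothing in the subsequent counting.
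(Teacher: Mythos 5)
Your proposal is correct and takes essentially the same route as the paper: both arguments reduce to the fact that the equations $q\cdot x=\alpha$ and $r\cdot x=\beta$ cut out the same line only if $(q_x,q_y,\alpha)$ is proportional to $(r_x,r_y,\beta)$, which the paper establishes in two steps (perpendicularity to a common radial line for (1), then a slope-intercept computation for (2)) and you read off in one. One small point in your favor: with $q=\lambda r$ the consistent conclusion is $\alpha=\lambda\beta$, i.e.\ $|q|/|r|=\alpha/\beta$, as you derive; the paper's substitution replaces $\alpha/q_2$ by $\lambda\alpha/r_2$ rather than $\alpha/(\lambda r_2)$ and so lands on $\lambda\alpha=\beta$, but this discrepancy in the direction of the ratio is harmless because the lemma is used only qualitatively in the proof of Theorem \ref{general}.
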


Lemma \ref{setB} is proved in Section \ref{lemmaProofs}. With these conditions in tow, we see that the pairs of points in $B$ are quite rare. Fix any radial line, $L$. Each point from $P \cap L$ can have its $\alpha$-line overlap with at most one $\beta$-line. Similarly, each point from $P \cap L$ can have its $\beta$-line overlap with at most one $\alpha$-line. So each point from $P \cap L$ can be in at most two pairs from $B$.
\begin{center}
\includegraphics[scale=1]{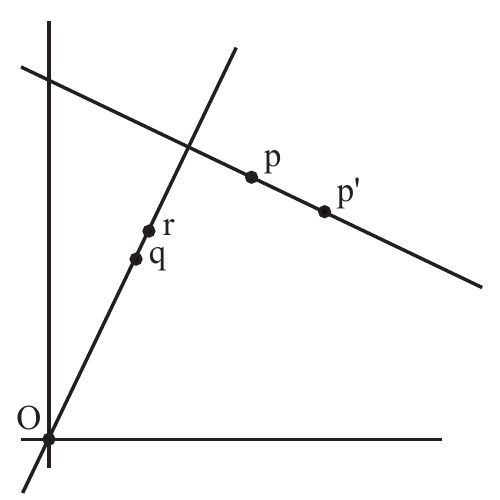}\\
\small{FIGURE 2: Here, the pair $(q,r)$ lies on a radial line. The points $p$ and $p'$ lie on the $\alpha$-line of $q$, which coincides with the $\beta$-line of $r$.}
\end{center}
Any pair, $(q,r)\in B,$ that lies on $L$ will contribute as many triples of points, $(p,q,r)$, to $\Pi_{\alpha,\beta}(P)$ as there are points coincident to both the $\alpha$-line of $q$ and the $\beta$-line of $r$. As these families of shared lines are parallel for point pairs along $L$, each point, $p$, can be on at most one $\alpha$-line, regardless of a possible overlap with a $\beta$-line. This means that each point $p\in P$ can be in at most one triple of the form $(p,q,r) \in \Pi_{\alpha,\beta}(P)$ with a pair of points, $q$ and $r$, from $L$. The total number of triples contributed by pairs of points in $L$ is therefore no more than $n$.

As there are no more than $n$ points, there can be no more than $n$ distinct radial lines to consider. Since each radial line can contribute no more than $n$ triples to $\Pi_{\alpha,\beta}(P)$, the maximum contribution to $\Pi_{\alpha,\beta}(P)$ by pairs in $B$ is no more than $n^2$. 
\end{proof}

\subsection{Proof of Theorem \ref{separation}}\label{sepProof}

\begin{proof} 

First, we define $\alpha$-lines and $\beta$-lines as in the proof of Theorem \ref{general}. Again, we let $\mathcal L_\alpha (p)$ and $\mathcal L_\beta (p)$ denote the set of all points incident to the $\alpha$-line or $\beta$-line for the given point $p$. Now, consider the $\alpha$-lines and $\beta$-lines for each $p \in P$. Referring back to the definition of $\Pi_{\alpha,\beta}(P)$, we see that a triple of points, $(p,q,r)$, will be in $\Pi_{\alpha,\beta}(P)$ precisely when $q$ lies on the $\alpha$-line of $p$ and $r$ lies on the $\beta$-line of $p$. So the quantity we aim to estimate is
\begin{equation}\label{bigEst}
|\Pi_{\alpha,\beta}(P)|=\sum_{p\in P} |\mathcal L_\alpha(p)||\mathcal L_\beta(p)|.
\end{equation}

It follows that we want a bound on the number of times that a point, $p \in P$, is incident to an $\alpha$-line or $\beta$-line. The following result will be proved in Section \ref{lemmaProofs}.

\begin{lemma}\label{STLemma}
In the setting above, the number of point-line incidences, $I$, is 
$$I \lesssim n^{\frac{4}{3}}.$$
\end {lemma}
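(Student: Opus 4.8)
The plan is to recognize $I$ as a classical point--line incidence count and dispatch it with the Szemer\'edi--Trotter theorem; notably, the separation hypothesis of Theorem \ref{separation} plays no role in this lemma (the bound $n^{4/3}$ carries no dependence on $\epsilon$) and will enter only afterward, when one passes from $I$ to the weighted sum \eqref{bigEst}. I take the point set to be $P$ itself, of size $n$, and the line set to be the family of all $\alpha$-lines and $\beta$-lines of the points of $P$, so that
$I = \sum_{p\in P}\left(|\mathcal L_\alpha(p)| + |\mathcal L_\beta(p)|\right)$
is exactly the number of incidences between these $n$ points and this family of lines.

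First I would check that within each family the lines are distinct, so that Szemer\'edi--Trotter applies cleanly. For fixed $p$, the $\alpha$-line is $\{q : p_x q_x + p_y q_y = \alpha\}$. Writing any line that misses the origin in the normalized form $a q_x + b q_y = \alpha$, with right-hand side equal to the fixed nonzero constant $\alpha$, pins down the coefficients $(a,b)$ uniquely; hence the $\alpha$-line recovers $p=(a,b)$, and distinct points of $P$ yield distinct $\alpha$-lines. The same reasoning gives $n$ distinct $\beta$-lines. The only possible coincidences are between an $\alpha$-line and a $\beta$-line, which is precisely the situation governed by Lemma \ref{setB}; to avoid counting any line with multiplicity I would simply treat the two families separately, as at most $n$ distinct $\alpha$-lines and at most $n$ distinct $\beta$-lines.

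Next I would invoke the Szemer\'edi--Trotter theorem, which bounds the number of incidences between $m$ points and $\ell$ distinct lines by $\lesssim (m\ell)^{2/3} + m + \ell$. Taking $m=n$ together with the $n$ distinct $\alpha$-lines gives a contribution $\lesssim (n\cdot n)^{2/3} + n = n^{4/3} + n \lesssim n^{4/3}$, and the identical estimate holds for the $n$ $\beta$-lines. Summing the two contributions yields $I \lesssim n^{4/3}$, as claimed.

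Since the core of the argument is a direct application of a known incidence bound, I do not expect a serious obstacle; the only point demanding care is the bookkeeping described above, namely confirming that distinct centers produce distinct lines within each family, so that the hypotheses of Szemer\'edi--Trotter are genuinely met and no line is counted with large multiplicity. Handling the $\alpha$- and $\beta$-families one at a time is the cleanest way to guarantee this, since it caps every line's multiplicity at one within the family being counted.
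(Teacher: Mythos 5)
Your proof is correct, but it takes a genuinely different route from the paper. The paper explicitly flags the possible coincidence of an $\alpha$-line with a $\beta$-line as an obstruction to quoting Szemer\'edi--Trotter directly, and instead reruns Sz\'ekely's crossing-number argument on a topological multigraph with edge multiplicity at most $2$ (Lemma \ref{szek}), obtaining $\frac{(I-2n)^3}{2n^2}\lesssim cr(G)\lesssim n^2$ and hence $I\lesssim n^{4/3}$. You dissolve that obstruction at the outset by observing that $I=\sum_{p}|\mathcal L_\alpha(p)|+\sum_{p}|\mathcal L_\beta(p)|$ splits into two single-family incidence counts, that within each family distinct points determine distinct lines (your normalization argument, which is exactly where the hypothesis $\alpha,\beta\neq 0$ is used), and that a cross-family coincidence is irrelevant because each sum only ever sees one family. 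Two black-box applications of Szemer\'edi--Trotter with $m=\ell=n$ then give $I\lesssim n^{4/3}$. Your version is shorter and arguably cleaner, since it needs no multigraph bookkeeping; the paper's version is self-contained modulo the crossing-number lemma and shows explicitly how to accommodate bounded edge multiplicity, which is the standard way to handle such coincidences when a clean splitting is not available. Both arguments are valid and yield the same bound, and both correctly note that the separation hypothesis plays no role here.
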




Recalling the definitions of $\mathcal L_\alpha(p)$ and $\mathcal L_\beta(p),$ we write $I$ as:

$$
I=\sum_{p \in P} (|\mathcal L_\alpha(p)|+|\mathcal L_\beta(p)|)
$$

We now dyadically decompose $P$ into two families of disjoint sets defined by the number of incidences they contribute.

$$
P_j^{\alpha} := \{p \in P : 2^j \leq |\mathcal L_\alpha (p)| < 2^{j+1}\}
$$

Where $P_k^{\beta}$ is defined similarly. Now, if a point has roughly $2^j$ points from $P$ on its $\alpha$-line, and $2^k$ points from $P$ on its $\beta$-line, then it will be in the intersection of $P_j^{\alpha}$ and $P_k^{\beta}$. Let us define these intersections as:

$$P_{j,k} := P_j^{\alpha} \cap P_k^{\beta}.$$

The intersection of any $\alpha$-line or $\beta$-line with $[0,1]^2$ can be no longer than $\sqrt{2}$. All of our points are contained in $[0,1]^2$, so it will suffice to estimate the maximum number of points of $P$ on any line segment of length $\leq \sqrt{2}$. Fix such a segment, $\ell$. Recall the separation condition,
$$\min\{|p-q| : p,q\in P,~ p\neq q \}\geq \epsilon.$$
So each point on $\ell$ must have a vacant length of segment at least $\epsilon$ long in either direction. So if $\ell$ had points packed on it maximally, there would be no more than $$\frac{\sqrt{2}}{\epsilon}.$$
We can see that for every point $p \in P$,
\begin{equation}\label{sepLine}
|\mathcal L_\alpha(p)| \lesssim \epsilon^{-1}\text{ and }
|\mathcal L_\beta(p)| \lesssim \epsilon^{-1},
\end{equation}
so we can be assured that $P_{j,k}$ is empty for $j$ or $k$ bigger than $\left\lceil \log_2\left(\epsilon^{-1}\right)\right\rceil$. This also tells us that for all relevant indices $j$ and $k$ in the sums to follow, we have:
\begin{equation}\label{partC}
2^j,2^k \lesssim \epsilon^{-1}
\end{equation}

By combining the above:
\begin{align*}
n^\frac{4}{3} \gtrsim I &= \sum_{p \in P} (|\mathcal L_\alpha(p)|+|\mathcal L_\beta(p)|)\\&\approx \sum_{j=0}^{\left\lceil \log_2\left(\epsilon^{-1}\right)\right\rceil} \sum_{k=0}^{\left\lceil \log_2\left(\epsilon^{-1}\right)\right\rceil} \left(\sum_{p \in P_{j,k}} \left(2^j+2^k\right)\right)\\
&= \sum_{j=0}^{\left\lceil \log_2\left(\epsilon^{-1}\right)\right\rceil} \sum_{k=0}^{\left\lceil \log_2\left(\epsilon^{-1}\right)\right\rceil}\left( |P_{j,k}|\left(2^j+2^k\right)\right).
\end{align*}

So, for any pair of indices, $j$ and $k$, we have the following bound
\begin{equation}\label{partA}
|P_{j,k}|\left(2^j +2^k\right) \lesssim n^\frac{4}{3}.
\end{equation}

We now dyadically decompose the sum in \eqref{bigEst} as we did with the sum estimating $I$.

\begin{align}\nonumber
|\Pi_{\alpha,\beta}(P)|&=\sum_{p\in P} |\mathcal L_\alpha(p)||\mathcal L_\beta(p)|\\
&\approx  \sum_{j=0}^{\left\lceil \log_2\left(\epsilon^{-1}\right)\right\rceil} \sum_{k=0}^{\left\lceil \log_2\left(\epsilon^{-1}\right)\right\rceil}\left( |P_{j,k}|\left(2^j\right)( 2^k)\right)\label{bigSum}
\end{align}

Let $l$ and $m$ be a pair of indices that give the largest summand in \eqref{bigSum}. Now we have:

\begin{align}\nonumber
|\Pi_{\alpha,\beta}(P)|&\approx  \sum_{j=0}^{\left\lceil \log_2\left(\epsilon^{-1}\right)\right\rceil} \sum_{k=0}^{\left\lceil \log_2\left(\epsilon^{-1}\right)\right\rceil}\left( |P_{j,k}|\left(2^j\right)( 2^k)\right)\\\nonumber
&\lesssim   \sum_{j=0}^{\left\lceil \log_2\left(\epsilon^{-1}\right)\right\rceil} \sum_{k=0}^{\left\lceil \log_2\left(\epsilon^{-1}\right)\right\rceil}\left( |P_{l,m}|(2^l)\left( 2^m\right)\right)\\
&\lesssim   \left\lceil \log_2\left(\epsilon^{-1}\right)\right\rceil \left\lceil \log_2\left(\epsilon^{-1}\right)\right\rceil \left( |P_{l,m}|(2^l)\left( 2^m\right)\right)\label{biggerSum}
\end{align}

Burying the constants from the logarithms in \eqref{biggerSum}, we get:
\begin{equation}\label{partB}
|\Pi_{\alpha,\beta}(P)| \lesssim \left( |P_{l,m}|(2^l)\left( 2^m\right)\right) \log \epsilon^{-1}.
\end{equation}

 Finally, by \eqref{partB}, adding in a term of $2^m$, \eqref{partA}, and \eqref{partC}, we conclude

\begin{align*}
|\Pi_{\alpha,\beta}(P)| &\lesssim \left( |P_{l,m}|(2^l)\left( 2^m\right)\right)\log \left(\epsilon^{-1}\right)\\ &\lesssim \left( |P_{l,m}|\left(2^l+2^m\right)\left( 2^m\right)\right)\log \left(\epsilon^{-1}\right)\\
&\lesssim  n^\frac{4}{3}\left( 2^m\right)\log\left(\epsilon^{-1}\right)\\
&\lesssim n^\frac{4}{3}\epsilon^{-1}\log \left(\epsilon^{-1}\right),
\end{align*}
as desired.
%
%
%
%
\end{proof}

\section{Proofs of Lemmas}\label{lemmaProofs}

\subsection{Proof of Lemma \ref{setB}}

To see $(1)$, notice that any line that yields the prescribed dot products for a given point is perpendicular to that point's radial line. For the $\alpha$-line of $q$ and $\beta$-line of $r$ to be coincidental, they must both be perpendicular to the same radial line, and thus, be generated from two points upon the same radial line.

For $(2)$, assuming the $\alpha$-line of $q$ is coincidental to the $\beta$-line of $r$, we get the following two equations of lines in the plane:
$$y=\frac{\alpha}{q_2} - \frac{q_1}{q_2}x$$
$$y=\frac{\beta}{r_2} - \frac{r_1}{r_2}x.$$
We know that $q$ and $r$ are on the same radial line, so there must exist a $\lambda>0$ such that,
$$q_1 = \lambda r_1, \text{ and } q_2=\lambda r_2.$$
We set the equations equal to one another and get:
$$\frac{\alpha}{q_2} - \frac{q_1}{q_2}x=\frac{\beta}{r_2} - \frac{r_1}{r_2}x.$$
By substituting for the coordinates of $q$,
$$\frac{\lambda \alpha}{r_2} - \frac{\lambda r_1}{\lambda r_2}x=\frac{\beta}{r_2} - \frac{r_1}{r_2}x.$$
After simplifying, we see that
$$\lambda \alpha = \beta.$$
\subsection{Proof of Lemma \ref{STLemma}}

There is a small technical obstruction to a direct application of the celebrated Szemer\'edi-Trotter point-line incidence theorem, which is that two types of line may be coincident. This turns out to not be a problem in our case. By appealing to the definitions of $\alpha$-line and $\beta$-line, we can see that no two points can determine the same $\alpha$-line or $\beta$-line. However, as we have seen above, it is possible for the $\alpha$-line of a point to overlap the $\beta$-line of a point (a different point, unless $\alpha = \beta$).

We prove Lemma \ref{STLemma} using techniques of Sz\' ekely, from \cite{Szek}. Draw a graph (possibly a multigraph) with the $n$ points as vertices, and the segments of the $\alpha$-lines and $\beta$-lines connecting adjacent points as edges. Observe that no pair of points can have more than two edges connecting them, and this can only happen if the $\alpha$-line of a point is coincident to the $\beta$-line of a point. If this does happen, we can replace the line segments by two curves, whose endpoints are the adjacent points in question, drawn in such a way so as not to cross any more or fewer edges than the initial line segment crossed.

Notice that the number of edges contributed by each line is equal to the number of point-line incidences on that line minus one (For example, if there are seven points on a given line, we would connect consecutive points with six edges.). So the number of edges is equal to the number of incidences minus the number of lines that contribute incidences. There are exactly $2n$ lines ($n$ of each type) that may contribute incidences. Let $I$ denote the number of point-line incidences, and $e$ denote the number of edges in the graph. We have that
\begin{equation}\label{edges}
e\geq I-2n.
\end{equation}

We let $cr(G)$ denote the {\it crossing number of $G$}, which is the maximum number times that edges of $G$ must cross one another at some point which is not a vertex, for any redrawing of $G$. We appeal to the crossing number lemma in \cite{Szek}.

\begin{lemma}\label{szek}
Given a topological multi-graph, $G$, with $v$ vertices, $e$ edges, and a maximum edge multiplicity of $m$, if $e>5mv$, then
$$cr(G) \gtrsim \frac{e^3}{mv^2}.$$
\end{lemma}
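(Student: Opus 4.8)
The plan is to deduce the multigraph bound from the classical crossing number inequality for \emph{simple} graphs (the core estimate behind \cite{Szek}, namely $cr(H)\gtrsim e(H)^{3}/v(H)^{2}$ whenever $H$ is simple and $e(H)\geq 4\,v(H)$) by a random edge-sparsification that collapses the multiplicity down to a constant. The subtlety to keep in mind throughout is that the naive route — deleting one edge per crossing to get the base inequality $cr(G)\geq e-m(3v-6)$ and then thinning vertices — only yields $cr(G)\gtrsim e^{3}/(m^{2}v^{2})$; recovering the \emph{linear} dependence on $m$ is the real content, and it is exactly what the sampling rate $q\approx 1/m$ below is designed to produce.

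Fix a drawing $D$ of $G$ realizing $cr(G)$. First I would retain each of the $e$ edges independently with probability $q=\Theta(1/m)$, and then, among the surviving edges, discard all but one edge in each parallel class, producing a \emph{simple} graph $H$ drawn in the plane by the curves inherited from $D$. Since every class has at most $m$ edges, the expected number of discarded edges is at most $\tfrac12(m-1)q^{2}e$, so $\mathbb{E}[e(H)]\geq qe\bigl(1-\tfrac12(m-1)q\bigr)\gtrsim e/m$. The hypothesis $e>5mv$ (after fixing the sampling constant, which affects only the implied constant in $\lesssim$) then forces $e(H)\gtrsim v$, and since $e(H)$ is a sum of independent indicators it concentrates, so with overwhelming probability $e(H)\geq 4v$ and the simple-graph crossing inequality applies to $H$.

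The crux is to play two estimates for $cr(H)$ against one another. On one hand, the simple crossing inequality gives $cr(H)\gtrsim e(H)^{3}/v^{2}$ on the event $e(H)\geq 4v$; taking expectations and using convexity of $t\mapsto t^{3}$ (Jensen) together with the concentration above yields $\mathbb{E}[cr(H)]\gtrsim (\mathbb{E}[e(H)])^{3}/v^{2}\gtrsim (e/m)^{3}/v^{2}$. On the other hand, the drawing of $H$ is inherited from $D$, so every crossing of $H$ is a crossing of $D$ both of whose edges survived the sampling; as the two edges are kept independently, each crossing of $D$ persists with probability at most $q^{2}$, whence $\mathbb{E}[cr(H)]\leq q^{2}\,cr(G)$. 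Combining the two bounds gives $q^{2}\,cr(G)\gtrsim (e/m)^{3}/v^{2}$, and substituting $q=\Theta(1/m)$ turns the factor $q^{-2}=\Theta(m^{2})$ into precisely the amplification that upgrades the exponent of $m$ from $3$ in $(e/m)^{3}$ down to $1$, giving $cr(G)\gtrsim e^{3}/(mv^{2})$.

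The step I expect to be the main obstacle is this dependence on $m$: it is genuinely lost if one samples vertices (or applies the multiplicity-$m$ base inequality directly), and it is recovered only because sampling edges at rate $1/m$ makes $H$ simple while the survival probability $q^{2}$ of a crossing scales like $1/m^{2}$. Everything else — verifying that discarding parallel edges costs only a constant factor, and that $e>5mv$ keeps $e(H)$ above the crossing-lemma threshold after concentration — is routine bookkeeping that fixes only the implied constants. I would also note that the small-$m$ regime (say $m$ below an absolute constant) needs no sampling at all: there the multiplicity is $O(1)$ and the vertex-thinning argument already gives $cr(G)\gtrsim e^{3}/v^{2}\approx e^{3}/(mv^{2})$.
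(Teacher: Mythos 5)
You should first note a structural point: the paper never proves Lemma \ref{szek} at all --- it is imported verbatim from Sz\'ekely \cite{Szek} --- so there is no internal argument to compare against. Your proposal is, in substance, the standard proof of the multigraph crossing lemma (essentially Sz\'ekely's own): sample edges independently at rate $q=\Theta(1/m)$, discard duplicates within each parallel class to obtain a simple graph $H$ drawn with curves inherited from an optimal drawing of $G$, bound $\mathbb{E}[cr(H)]\leq q^{2}\,cr(G)$ by the independent survival of the two edges in each crossing, and play this against the simple-graph crossing lemma. Your bookkeeping for the deduplication loss (at most $\tfrac12(m-1)q^{2}e$ expected discarded edges) is right, the direction of Jensen is right, and your diagnosis that vertex sampling or splitting $G$ into $m$ simple layers only yields $e^{3}/(m^{2}v^{2})$ --- with the factor $q^{-2}=\Theta(m^{2})$ from edge sampling being exactly what recovers the linear dependence on $m$ --- is correct and is the heart of the matter.

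There is, however, one step that is wrong as written, precisely in the part you dismiss as routine bookkeeping. You claim $e>5mv$ plus concentration gives $e(H)\geq 4v$ with overwhelming probability. But with $q=c/m$ the guaranteed mean is only $\mathbb{E}[e(H)]\geq qe\bigl(1-\tfrac12(m-1)q\bigr)\geq 5c\bigl(1-\tfrac{c}{2}\bigr)v$ in the critical regime $e\approx 5mv$, and $c(1-\tfrac{c}{2})$ is maximized at $c=1$ with value $\tfrac12$; so the mean you can certify is at most $\tfrac52 v$, strictly \emph{below} the $4v$ threshold of the simple crossing lemma, and no choice of the sampling constant fixes this. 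Concentration then makes the event $e(H)\geq 4v$ unlikely rather than overwhelming, so the conditional application of the cubic inequality does not go through near the threshold. The standard repair avoids conditioning entirely: every simple graph satisfies the unconditional bound $cr(H)\geq \frac{e(H)^{3}}{64\,v^{2}}-v$ (trivially true when $e(H)<4v$), so one takes expectations, applies Jensen to the cubic term, and absorbs the additive error $v/q^{2}=m^{2}v/c^{2}$, which forces the hypothesis $e>Cmv$ for some explicit constant $C$ larger than $5$; the residual window $5mv<e\leq Cmv$ then needs separate (and not entirely trivial) treatment if one insists on the constant $5$. For the purposes of this paper the distinction is immaterial --- Lemma \ref{STLemma} only uses the dichotomy with \emph{some} fixed constant, which merely changes the bound $12n$ to $O(n)$ in the small-edge case --- but you should either prove the lemma with a larger threshold constant or replace the concentration step by the unconditional form of the crossing inequality.
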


For our setup, we have that $m\leq 2$, as there can be no more than two edges between any given pair of points. Now, either $e\leq5mv$ or $e>5mv$.
In the first case, we recall \eqref{edges} to see that
$$I\leq e+2n \leq 5mv+2n \leq 10n+2n=12n.$$
In the case that $e>5mv$, we appeal to Lemma \ref{szek} and get
$$\frac{(I-2n)^3}{2n^2}\leq \frac{e^3}{mv^2} \lesssim cr(G).$$
Notice that the crossing number of the graph can be no more than the number of times that the $\alpha$-lines and $\beta$-lines crossed one another. Since there are $2n$ total lines, and each line could potentially cross almost all of the others, the total number of crossing lines is bounded above by $(2n)^2$. As this would correspond to a drawing of the graph, $G$, we are guaranteed that the crossing number of $G$ is no more than $(2n)^2$. Comparing upper and lower bounds on $cr(G)$ yields
$$\frac{(I-2n)^3}{2n^2}\lesssim n^2,$$
which gives us that
$$I\lesssim n^\frac{4}{3}.$$
In either case, the claimed estimate holds.

\vskip.5in

\end{document}